\documentclass{article}


\usepackage{amssymb}
\usepackage{amsmath}
\usepackage{amsthm}
\usepackage{a4wide}

\usepackage[english]{babel}


\newcommand{\cenlin}{\centerline}

\newcommand{\ovline}{\overline}
\newcommand{\wtilde}{\widetilde}

\newcommand{\vect}[1]{\mathbf{#1}}

\newcommand{\larr}{\leftarrow}
\newcommand{\rarr}{\rightarrow}
\newcommand{\xrarr}[2]{\xrightarrow[#1]{#2}}

\newcommand{\Rarr}{\Rightarrow}
\newcommand{\LRarr}{\Leftrightarrow}

\newcommand{\backsl}{\backslash}

\newcommand{\nullset}{\varnothing}

\newcommand{\lesq}{\leqslant}
\newcommand{\greq}{\geqslant}

\newcommand{\veps}{\varepsilon}

\newcommand{\forceparindent}{\hskip 1.5em}

\newcommand{\vgap}{\vskip 3mm}

\newcounter{Formulanum}
\setcounter{Formulanum}{0}
\newcommand{\equ}[1]{\refstepcounter{Formulanum}%
\label{#1}%
(\arabic{Formulanum})}
\newcommand{\equref}[1]{(\ref{#1})}

\newtheorem{corollary}{Corollary}
\newtheorem{definition}{Definition}
\newtheorem{example}{Example}
\newtheorem{proposition}{Proposition}

\newenvironment{exampledescript}{$\vartriangleleft$}{$\vartriangleright$}


\begin{document}

\title{On sound ranging in proper metric spaces}

\author{Sergij V. Goncharov\thanks{Faculty of Mechanics and Mathematics, Oles Honchar Dnipro National University, 72 Gagarin Avenue, 49010 Dnipro, Ukraine.
\textit{E-mail: goncharov@mmf.dnulive.dp.ua}}}

\date{August 2018}

\maketitle

\begin{abstract}
We consider the sound ranging, or source localization, problem ---
find the source-point from the moments when the wave-sphere
of linearly, with time, increasing radius reaches the sensor-points ---
in the proper metric spaces (any closed ball is compact)
and, in particular, in the finite-dimensional normed spaces.
We approximate the solution to arbitrary precision by the iterative process
with the stopping criterion.
\end{abstract}

\cenlin{\small \textit{MSC2010:} Pri 41A65, Sec 54E50, 46B20, 40A05, 68W25}

\cenlin{\small \textit{Keywords:} sound ranging, localization, approximation, algorithm,
proper metric space, normed space}

\section*{Introduction}

\forceparindent
Let $(X;\rho)$ be a metric space, i.e. the set $X$ with the metric $\rho \colon X\times X \rarr \mathbb{R}_+$.
Let $\vect{s} \in X$ be an unknown point, ``source''.
At unknown moment $t_0 \in \mathbb{R}$ of time the source ``emits the (sound) wave'', which is the sphere
$\bigl\{ \vect{x} \in X \mid \rho(\vect{x}; \vect{s}) = v(t - t_0) \bigr\}$
for any moment $t \greq t_0$. We assume, without loss of generality, that ``sound velocity'' $v = 1$
(switch to scaled time $t \larr vt$ if $v \ne 1$).

Let $\{ \vect{r}_i \}_{i \in I}$, $\vect{r}_i \in X$, be an indexed set of known ``sensors''.
For each sensor we know the moment $t_i$ when it was reached by the expanding wave;
that is, $t_i = t_0 + \rho (\vect{r}_i; \vect{s})$ are known.

The \textit{sound ranging problem} (SRP), also called \textit{source localization},
is to find $\vect{s}$ and $t_0$ from known moments when the wave reaches known sensors,
$(\{ \vect{r}_i \}; \{ t_i \})$.

SRPs of this and more general forms, usually in Euclidean space, appear in acoustics, geophysics, navigation,
sensor networks, tracking among the others;
there is an abundant literature of the subject and of the proposed techniques,
see e.g. \cite[1]{compagnoni2017}, \cite[9.1]{huang2004} for further references.

\vgap

In \cite{goncharov2018} we investigated noiseless SRP in the infinite-dimensional separable Hilbert space $H$.
The method there is, basically, the ``classical'' one applied in $\mathbb{R}^m_2$, ---
we solve the set of implied equations $(t_i - t_0)^2 = \rho^2 ( \vect{r}_i ; \vect{s} ) = \sum\limits_j (r^{(i)}_j - s_j)^2$,
where $t_0$ and coordinates $\{ s_j \}_{j\in \mathbb{N}}$ of $\vect{s}$ are unknowns,
--- with few technicalities related to the countability of coordinates.
It is a method of ``solving'' kind in that we express exact values of $s_j$
through known parameters $t_i$ and $r^{(i)}_j$, in closed form.

This time we look into another generalization of SRP, without Euclidicity in general case.
The classical approach doesn't work anymore, because the coordinates, if there are any,
are not so easily ``extractable'' from the equations $t_i - t_0 = \rho( \vect{r}_i ; \vect{s} )$,
which become significantly nonlinear.

Instead, we describe the more or less ``universal'' iterative process that ``converges'' to the source
in certain sense explained further; this is a method of ``approximating'' kind.
In short, we cover the regions of the space by the balls, and repeatedly refine the cover by
a) replacing every ball with its cover by the balls of halved radius,
then b) removing from the cover each ball such that certain ``deviation''
at its center is greater than its radius. ``Deviation'' at the source is 0.

\vgap

It is presented as an algorithm.
How practical such algorithm is depends on its ``executor'',
or, in other words, what (and how many) elementary actions the executor is allowed to perform.
If e.g. it were a ``computer'' $\mathbb{U}$ with $\mathrm{card}(X)$ cores,
we would plainly assign to each core single $\vect{x} \in X$
to verify if $\vect{x}$ is a solution ($t_i - \rho(\vect{r}_i; \vect{x}) \equiv \mathrm{const}$, see below).
The target executor for our algorithm is far below $\mathbb{U}$,
it is closer to the ``general purpose'' computing devices of nowadays.

\vgap

\textbf{Disclaimer.}
The intent of this paper is not to proclaim the ``novelty'' of the method being described
(to put it mildly, that would be dubious),
but to ``plant'' (develop) the ``essence'' (approach) of akin methods
in more general ``soil'' (context) and watch how it ``blossoms'' (works).

We consider ``empty'' spaces without ``physics'' such as echoes, varying sound velocity,
noisy measurements (except for one remark),
focusing on rigour rather than realness and applicability.

Similarly to the $H$ case, the content of this paper has a ``folklore'' flavor, so ---

\vgap

\textbf{Acknowledgements.}
We thank everyone who 1) points out where these results
or their generalizations have been obtained already
(some paper from 1920--30s? something like \cite{kedlaya2002}?), or
2) by means of a time machine, delivers this paper to 1920--30s, when it should've appeared\textellipsis

\vgap

\textbf{Root finding vs. Optimization.} Searching for the minimum of $f(\vect{x})$
(or the maximum of $-f(\vect{x})$) is the \textit{optimization} problem
that is part of the most of approaches to solving SRPs, especially with noised measurements
($f(\vect{x})$ is called ``cost'' or ``plausibility'' function there).
It is performed either (1-stage) directly in the space of possible source positions
to estimate the actual position, or (2-stage) in the space of relative time-delays
$t_i - t_j$ between sensors to estimate these delays,
which then allow to obtain the source position in closed form or,
alternatively, estimate it as well;
see \cite{alamedapineda2014}, \cite{bestagini2013}, \cite{chen2002}, \cite{gillette2008}, \cite{huang2004}.
For example, in \cite{alamedapineda2014} the branch \& bound technique is applied
and compared to other ones.
The maximum likelihood estimator is one of common approaches to such optimization as well,
though there are issues with local minima when the cost function isn't strictly concave
(\cite[9.4]{huang2004}).
The Euclidicity of the space where the wave propagates is important
in deriving the closed form solutions and in the least squares localizations
(\cite[4]{bestagini2013}, \cite[9.5]{huang2004}).

In our simplified case the exact delays are known
and the non-negative function has unique zero; we search for that zero, rather than the extremum,
in the (non-Euclidean) space of possible source positions.
This is a \textit{root finding} of ``bracketing'', or ``exclude \& enclose'', type
(see \cite{byrne2008}, \cite{ostrowski1973}).

\vgap

The bibliography with somewhat more emphasis on the practice of sound ranging,
including historical surveys, was given in \cite{goncharov2018};
or better, see \cite{aubin2014}, \cite{compagnoni2017}, \cite[9]{huang2004}.

\vgap

``$\clubsuit$'' indicates the assumptions, or constraints,
that we require to hold unless stated otherwise.
``$\bullet$'' is for the statements that are considered to be well-known under given assumptions
(see e.g. \cite{giles1987}, \cite{jameson1974}, \cite{kolmogorov1975},
\cite{montesinos2015}, \cite{searcoid2007}) and included for the sake of completeness, without proofs or references.

\section{SR in proper metric spaces}

\subsection{Preliminaries}

\forceparindent
$\bullet$ ``2nd $\bigtriangleup$-inequality'':
$\forall \vect{x}, \vect{y}, \vect{z} \in X$
$\bigl| \rho(\vect{x}; \vect{z}) - \rho(\vect{z}; \vect{y}) \bigr| \lesq \rho(\vect{x}; \vect{y})$.

As usual, $\vect{x}_k \xrarr{k \rarr \infty}{} \vect{y}$ means $\rho(\vect{x}_k; \vect{y}) \xrarr{k \rarr \infty}{} 0$.

$\bullet$ Continuity of metric: $\vect{x}_k \xrarr{k \rarr \infty}{} \vect{y}$ $\Rarr$
$\rho(\vect{x}_k; \vect{z}) \xrarr{k \rarr \infty}{} \rho(\vect{y}; \vect{z})$.

$B(\vect{c}; r) = \{ \vect{x} \in X \mid \rho(\vect{x}; \vect{c}) < r \}$
and $B[\vect{c}; r] = \{ \vect{x} \in X \mid \rho(\vect{x}; \vect{c}) \lesq r \}$
denote the open and closed \textit{balls} with center $\vect{c}$ and of radius $r$.

$\bullet$ For any $B[\vect{c}; r]$ and any point $\vect{a}$, $\rho(\vect{a}; \vect{c}) = d$, we have

\cenlin{$\forall \vect{x} \in B[\vect{c}; r]$: $d - r \lesq \rho(\vect{x}; \vect{a}) \lesq d + r$}

The set $A \subseteq X$ is said to be \textit{compact} if $\forall \{ \vect{x}_k \}_{k\in \mathbb{N}} \subseteq A$:
$\exists$ $\{ \vect{x}_{k_l} \}_{l \in \mathbb{N}}$: $\vect{x}_{k_l} \xrarr{l \rarr \infty}{} \vect{x}_0 \in A$.

$\bullet$ If $A$ is compact, then any closed subset of $A$ is compact too.

The family of sets $\{ C_j \}_{j \in J}$, $C_j \subseteq X$, is said to be a \textit{cover} of $A \subseteq X$
if $A \subseteq \bigcup\limits_{j \in J} C_j$.

$\bullet$ The closed $A \subseteq X$ is compact if and only if any open cover of $A$ has finite subcover.

The set $A \subseteq X$ is called \textit{bounded} if
$\mathrm{diam}\, A = \sup\limits_{\vect{x}, \vect{y} \in A} \rho(\vect{x}; \vect{y}) < \infty$.

{\ }

$\clubsuit$M1. $(X; \rho)$ is \textit{proper}: any closed ball is compact.

Such spaces are also called \textit{finitely compact} or
having the \textit{Heine-Borel/Bolzano-Weierstrass property}
(\cite[1.5, p. 43]{deza2009}, \cite[1.4, p. 32]{papadopoulos2014};
in addition, see \cite{williamson1987}).

{\ }

$\bullet$ In this definition, ``any closed ball'' can be replaced with ``any closed, bounded subset''.

$\bullet$ A proper metric space is complete: any fundamental sequence converges.

In fact, it would suffice that
$\exists \delta > 0$: $\forall \vect{x} \in X$ $B[\vect{x}; \delta]$ is compact. Indeed,
if $\{ \vect{x}_k \}_{k=1}^{\infty}$ is fundamental, then
$\exists N$: $\{ \vect{x}_k \}_{k=N}^{\infty} \subseteq B[\vect{x}_N; \delta] = B$ $\Rarr$
$\exists \{ \vect{x}_{N+k_l} \}_{l=1}^{\infty}$: $\vect{x}_{N+k_l} \xrarr{l \rarr \infty}{} \vect{y} \in B$,
implying $\vect{x}_k \xrarr{k \rarr \infty}{} \vect{y}$ as well.
Also, the converse fails: infinite-dimensional $H$ is complete, but not proper.

{\ }

Now we proceed to the SRP.
The source $\vect{s} \in X$ and the emission moment $t_0 \in \mathbb{R}$ are unknown.

{\ }

$\clubsuit$1. The set of sensors is finite: $\{ \vect{r}_i \}_{i=1}^n$, $\vect{r}_i \in X$, and
$\vect{r}_i \ne \vect{r}_j$, $i \ne j$.

{\ }

These sensors and the moments

\hfill $t_i = t_0 + \rho(\vect{r}_i; \vect{s})$, $i = \ovline{1, n}$ \hfill \equ{eqMoments}

\noindent
define the SRP
$\bigl( \{ \vect{r}_i \} ; \{ t_i \} \bigr)$; each pair $(\vect{s'}; t')$ satisfying the set of equations

\hfill $t_i = t' + \rho( \vect{r}_i; \vect{s}' )$, $i = \ovline{1, n}$ \hfill \equ{eqMain}

\noindent
is a \textit{solution} of this SRP.
Since $t'$ is defined uniquely from any such equation for given $\vect{s}'$,
the source $\vect{s}'$ itself can be called a solution too.

{\ }

$\clubsuit$2. The solution $\vect{s}$ of the SRP $\bigl( \{ \vect{r}_i \} ; \{ t_i \} \bigr)$ is unique.

{\ }

Obviously, this is not the general case. For $n=1$ any $\vect{y} \in X$ is a solution,
with $t' = t_1 - \rho( \vect{y}; \vect{r}_1 )$.
In $\mathbb{R}^2_2$ we can place ``true'' and ``false'' sources, $\vect{s}$ and $\vect{s}'$
respectively, at 2 foci of hyperbola, and place 3 sensors on the same branch of that hyperbola.
Then $\rho( \vect{r}_i ; \vect{s} ) - \rho( \vect{r}_i; \vect{s}' ) \equiv d$,
thus $\vect{s}'$ emitting the wave at the moment $t' = t_0 + d$ is another solution.

To ensure the uniqueness of the solution in $\mathbb{R}^m_2$,
we can take $m+2$ sensors such that $\{ \vect{r}_2 - \vect{r}_1$; \ldots; $\vect{r}_{m+1} - \vect{r}_1 \}$
is a basis of $\mathbb{R}^m$ and $\vect{r}_{m+2} = 2 \vect{r}_1 - \vect{r}_2$
(\cite[Prop. 4]{goncharov2018}).

\begin{definition}
For any $\vect{x} \in X$ the \emph{backward moments}

\cenlin{$\tau_i(\vect{x}) := t_i - \rho( \vect{x}; \vect{r}_i )$, $i=\ovline{1,n}$}
\end{definition}

$\tau_i(\vect{x})$ is the moment when the wave must be emitted from $\vect{x}$ to reach $\vect{r}_i$ at the moment $t_i$.

\begin{definition}\label{defDefect}
For any $\vect{x} \in X$ the \emph{defect}

\cenlin{$D(\vect{x}) := \frac{1}{n} \sum\limits_{i=1}^n
\bigl| \tau_i(\vect{x}) - \frac{1}{n} \sum\limits_{j=1}^n \tau_j(\vect{x}) \bigr|$}
\end{definition}

Cf. e.g. \cite[3]{bestagini2013} or \cite[2.4]{pollefeys2008}.
We rewrite

\cenlin{$D(\vect{x}) = \frac{1}{n^2} \sum\limits_{i=1}^n
\bigl| n \tau_i(\vect{x}) - \sum\limits_{j=1}^n \tau_j(\vect{x}) \bigr| =
\frac{1}{n^2} \sum\limits_{i=1}^n
\bigl| \sum\limits_{j=1}^n \bigl[ \tau_i(\vect{x}) - \tau_j(\vect{x}) \bigr] \, \bigr| \lesq
\frac{1}{n^2} \sum\limits_{i=1}^n
\sum\limits_{j=1}^n \bigl| \tau_i(\vect{x}) - \tau_j(\vect{x}) \bigr|$}

The elementary properties of $D(\cdot)$ follow
(Props. \ref{propDefectZeroSol}--\ref{propConcentrationNearZero}).

\begin{proposition}\label{propDefectZeroSol}
$\vect{s}' \in X$ is the solution of the SRP if and only if $D(\vect{s}') = 0$.
\end{proposition}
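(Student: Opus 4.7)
The proof is essentially a direct unwinding of the definitions, so my plan is to separate the two implications and exploit the fact that $D(\vect{x})$ is, up to the factor $1/n$, the mean absolute deviation of the tuple $\bigl( \tau_1(\vect{x}), \ldots, \tau_n(\vect{x}) \bigr)$ from its own mean.

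For the forward direction ($\Rightarrow$), I would start from the assumption that $\vect{s}'$ is a solution, i.e. there exists $t'$ such that $t_i = t' + \rho(\vect{r}_i; \vect{s}')$ for all $i = \ovline{1,n}$. Rewriting, this gives $\tau_i(\vect{s}') = t_i - \rho(\vect{s}'; \vect{r}_i) = t'$ for every $i$. In particular, $\frac{1}{n}\sum_{j=1}^n \tau_j(\vect{s}') = t'$ as well, so every summand $\bigl| \tau_i(\vect{s}') - \frac{1}{n}\sum_j \tau_j(\vect{s}') \bigr|$ in Definition \ref{defDefect} is $0$, whence $D(\vect{s}') = 0$.

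For the backward direction ($\Leftarrow$), I would use non-negativity of the summands: since $D(\vect{s}') = 0$ and each $\bigl| \tau_i(\vect{s}') - \bar\tau \bigr| \greq 0$, where $\bar\tau := \frac{1}{n}\sum_j \tau_j(\vect{s}')$, every one of these quantities must vanish, so $\tau_i(\vect{s}') = \bar\tau$ for all $i$. Setting $t' := \bar\tau$ and unfolding the definition of $\tau_i$ yields $t_i = t' + \rho(\vect{r}_i; \vect{s}')$ for each $i$, which is precisely equation \equref{eqMain}; thus $\vect{s}'$ is a solution.

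There is no real obstacle here: the statement is a tautology modulo the observation that a sum of non-negative reals is zero iff each term is. The only thing worth being mindful of is not confusing $t_0$ (the actual emission moment associated with the true source $\vect{s}$) with $t'$ (the emission moment witnessing that $\vect{s}'$ is a solution); the argument never needs uniqueness (assumption $\clubsuit$2), only the definitions of $\tau_i$ and $D$.
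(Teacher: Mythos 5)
Your proof is correct and follows essentially the same route as the paper's: unwind the definitions in the forward direction, and use that a sum of non-negative terms vanishes only if each term does in the backward direction. The only cosmetic difference is that the paper argues via the rewritten form $\tau_i(\vect{s}') - \tau_j(\vect{s}') \equiv 0$ while you work directly with the deviations from the mean $\bar\tau$; both are immediate.
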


\begin{proof}
If $\vect{s}'$ is such solution, then $t_i = t' + \rho( \vect{r}_i ; \vect{s}' )$,
$\tau_i(\vect{s}') \equiv t'$ $\Rarr$ $\tau_i(\vect{s}') - \tau_j(\vect{s}') \equiv 0$, so $D(\vect{s}') = 0$.
Contrariwise, $D(\vect{s}') = 0$ implies $\tau_i(\vect{s}') \equiv t' = \frac{1}{n} \sum\limits_{j=1}^n \tau_j(\vect{s}')$,
and $(\vect{s}'; t')$ is the solution.
\end{proof}

\begin{corollary}\label{corDefectSingleZero}
$D(\vect{x})$ has exactly one zero in $X$, at $\vect{x} = \vect{s}$.
\end{corollary}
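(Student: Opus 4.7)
The plan is to observe that Corollary \ref{corDefectSingleZero} is essentially a one-line consequence of the preceding Proposition \ref{propDefectZeroSol} together with the standing assumption $\clubsuit$2, so the ``proof'' is really just a bookkeeping step that combines the two.

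First, I would invoke Proposition \ref{propDefectZeroSol} in both directions: it characterizes the zero set of $D(\cdot)$ as exactly the set of solutions of the SRP $\bigl(\{\vect{r}_i\};\{t_i\}\bigr)$. In particular, $D(\vect{s}) = 0$ because $(\vect{s};t_0)$ satisfies \equref{eqMoments}, so $\vect{s}$ is a solution in the sense of \equref{eqMain}; conversely, every $\vect{x} \in X$ with $D(\vect{x}) = 0$ must be a solution.

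Next, I would apply $\clubsuit$2, which asserts that the solution of the SRP is unique. Combining the two yields that the zero set of $D$ is the singleton $\{\vect{s}\}$, which is precisely the claim.

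The only potential obstacle is conceptual rather than technical: one must remember that $\clubsuit$2 is a standing assumption (flagged by the club symbol), and that ``solution'' in Proposition \ref{propDefectZeroSol} means a pair $(\vect{s}';t')$ satisfying \equref{eqMain}, so uniqueness of $\vect{s}'$ alone suffices (as the accompanying $t'$ is determined by $\vect{s}'$, as noted right after \equref{eqMain}). No further estimates, continuity, or properness arguments are needed here; those will enter in later statements about the behaviour of $D$ away from its zero.
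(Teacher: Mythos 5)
Your proposal is correct and matches the paper's intent exactly: the corollary is stated without proof precisely because it is the immediate combination of Proposition \ref{propDefectZeroSol} (zeros of $D$ are exactly the solutions) with the standing uniqueness assumption $\clubsuit$2. Nothing further is needed.
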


\begin{proposition}\label{propDefectTwoDiffsNotLess}
$\forall \vect{x} \in X$ $\exists i, j$: $|\tau_i(\vect{x}) - \tau_j(\vect{x})| \greq D(\vect{x})$.
\end{proposition}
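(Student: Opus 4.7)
The plan is to combine the upper bound on $D(\vect{x})$ already derived in the excerpt with an elementary pigeonhole / averaging step. Recall that, just before the statement, the paper established
\[
D(\vect{x}) \;\lesq\; \frac{1}{n^2} \sum_{i=1}^n \sum_{j=1}^n \bigl| \tau_i(\vect{x}) - \tau_j(\vect{x}) \bigr|.
\]
The right-hand side is exactly the arithmetic mean of the $n^2$ nonnegative numbers $\bigl|\tau_i(\vect{x}) - \tau_j(\vect{x})\bigr|$, $i,j \in \{1,\dots,n\}$.

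First, I would invoke the standard fact that the maximum of a finite collection of real numbers is no smaller than their mean: there must exist indices $i^*, j^* \in \{1,\dots,n\}$ such that
\[
\bigl|\tau_{i^*}(\vect{x}) - \tau_{j^*}(\vect{x})\bigr| \;\greq\; \frac{1}{n^2} \sum_{i=1}^n \sum_{j=1}^n \bigl|\tau_i(\vect{x}) - \tau_j(\vect{x})\bigr|.
\]
Chaining this with the displayed bound on $D(\vect{x})$ yields $\bigl|\tau_{i^*}(\vect{x}) - \tau_{j^*}(\vect{x})\bigr| \greq D(\vect{x})$, which is the required claim.

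There is essentially no obstacle here; the only thing worth noting is that the $n$ diagonal terms with $i = j$ contribute $0$ to the mean, so the inequality is actually a bit loose, but that has no bearing on the statement. The entire proof fits in two lines once the bound preceding the proposition is cited.
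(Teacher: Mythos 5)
Your proof is correct and is essentially the paper's own argument: both rest on the displayed bound $D(\vect{x}) \lesq \frac{1}{n^2}\sum_{i,j}|\tau_i(\vect{x})-\tau_j(\vect{x})|$, with the paper phrasing the ``maximum is at least the mean'' step as a proof by contradiction (if every $|\tau_i-\tau_j|$ were $< D(\vect{x})$, then $D(\vect{x}) < D(\vect{x})$). No substantive difference.
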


\begin{proof}
Assuming the contrary, we have
$D(\vect{x}) < \frac{1}{n^2} \sum\limits_{i=1}^n \sum\limits_{j=1}^n D(\vect{x}) = D(\vect{x})$ ---
a contradiction.
\end{proof}

\begin{proposition}\label{propDefectDiffUppEstim}
$\forall \vect{x}, \vect{y} \in X$:
$\bigl| D(\vect{x}) - D(\vect{y}) \bigr| \lesq 2 \rho(\vect{x}; \vect{y})$.
\end{proposition}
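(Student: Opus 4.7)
The plan is to exploit the fact that each $\tau_i$ is $1$-Lipschitz in $\vect{x}$ and to propagate that Lipschitz bound through the centering and the absolute value that build $D$.

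First I would note that $\tau_i(\vect{x}) - \tau_i(\vect{y}) = \rho(\vect{y};\vect{r}_i) - \rho(\vect{x};\vect{r}_i)$, since the $t_i$ cancel. The 2nd $\bigtriangleup$-inequality listed in the preliminaries then gives $|\tau_i(\vect{x}) - \tau_i(\vect{y})| \lesq \rho(\vect{x};\vect{y})$ for every $i$. Averaging this over $i$ and using $|\frac{1}{n}\sum_j z_j| \lesq \frac{1}{n}\sum_j |z_j|$, I get the same Lipschitz bound for the mean $\bar\tau(\vect{x}) := \frac{1}{n}\sum_{j=1}^n \tau_j(\vect{x})$, namely $|\bar\tau(\vect{x}) - \bar\tau(\vect{y})| \lesq \rho(\vect{x};\vect{y})$.

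Next I would expand $D(\vect{x}) - D(\vect{y}) = \frac{1}{n}\sum_{i=1}^n\bigl(|\tau_i(\vect{x}) - \bar\tau(\vect{x})| - |\tau_i(\vect{y}) - \bar\tau(\vect{y})|\bigr)$, take the absolute value inside the sum, and apply the reverse triangle inequality $\bigl||a|-|b|\bigr| \lesq |a-b|$ term by term. This replaces each summand by $\bigl|[\tau_i(\vect{x}) - \tau_i(\vect{y})] - [\bar\tau(\vect{x}) - \bar\tau(\vect{y})]\bigr|$, which I split by the ordinary triangle inequality into the two Lipschitz pieces bounded above. Summing and dividing by $n$ yields $|D(\vect{x}) - D(\vect{y})| \lesq \rho(\vect{x};\vect{y}) + \rho(\vect{x};\vect{y}) = 2\rho(\vect{x};\vect{y})$, as required.

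There is no real obstacle; the argument is bookkeeping once the $1$-Lipschitz behavior of each $\tau_i$ is in hand. The only subtlety worth flagging is that the factor $2$ is an artefact of handling the centering naively: one contribution comes from each $\tau_i$ and another from $\bar\tau$. A sharper constant could be pursued by exploiting the cancellation built into the deviations $\tau_i - \bar\tau$, but since the proposition only asks for $2\rho(\vect{x};\vect{y})$, I would not pursue that refinement here.
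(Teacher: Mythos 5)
Your proof is correct and follows essentially the same route as the paper's: reverse triangle inequality on the absolute values, ordinary triangle inequality to split each term into two pieces, and the $1$-Lipschitz property of each $\tau_i$ (hence of the mean). The only cosmetic difference is that you keep the centering as $\bar\tau$ while the paper expands it into the double sum $\frac{1}{n^2}\sum_{i,j}$, which is the same computation with the $j$-average written out.
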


\begin{proof}
$\bigl| D(\vect{x}) - D(\vect{y}) \bigr| =
\frac{1}{n^2} \bigl| \sum\limits_i | \ldots | - \sum\limits_i | \ldots | \, \bigr| =
\frac{1}{n^2} \bigl| \sum\limits_i \bigl[ \, | \sum\limits_j (\ldots) | - | \sum\limits_j (\ldots) | \, \bigr] \, \bigr| \lesq$

\cenlin{$\lesq \frac{1}{n^2} \sum\limits_i \bigl| \sum\limits_j (\ldots) - \sum\limits_j (\ldots) \bigr| =
\frac{1}{n^2} \sum\limits_{i=1}^n \Bigl| \sum\limits_{j=1}^n
\bigl[ \tau_i(\vect{x}) - \tau_j(\vect{x}) -
\{ \tau_i(\vect{y}) - \tau_j(\vect{y}) \} \bigr] \, \Bigr| \lesq$}

\cenlin{$\lesq \frac{1}{n^2} \sum\limits_{i=1}^n \sum\limits_{j=1}^n \Bigl| \bigl[ \tau_i(\vect{x}) -
\tau_i(\vect{y}) \bigr] - \bigl[ \tau_j(\vect{x}) - \tau_j(\vect{y}) \bigr] \, \Bigr| \lesq
\frac{1}{n^2} \sum\limits_{i,j} \Bigl[ \, \bigl| \tau_i(\vect{x}) -
\tau_i(\vect{y}) \bigr| + \bigl| \tau_j(\vect{x}) - \tau_j(\vect{y}) \bigr| \, \Bigr] =$}

\hfill $= \frac{1}{n^2} \sum\limits_{i,j} \bigl[ \, \bigl| \rho( \vect{x}; \vect{r}_i ) -
\rho( \vect{y}; \vect{r}_i ) \bigr| + \bigl| \rho( \vect{x}; \vect{r}_j ) -
\rho( \vect{y}; \vect{r}_j ) \bigr| \, \bigr] \lesq
\frac{1}{n^2} \sum\limits_{i,j} \bigl[ \rho( \vect{x}; \vect{y} ) + \rho( \vect{x}; \vect{y} ) \bigr] =
2 \rho(\vect{x}; \vect{y})$ \hfill
\end{proof}

\noindent
--- $D(\cdot)$ is a Lipschitz function (see \cite[6]{heinonen2001}, \cite[9.4]{searcoid2007}).

\begin{corollary}\label{corDefectUnifCont}
$D(\vect{x})$ is uniformly continuous on $X$.
\end{corollary}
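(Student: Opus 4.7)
The plan is essentially to invoke the previous proposition and note that any Lipschitz function on a metric space is automatically uniformly continuous, with a $\delta$ that depends only on $\varepsilon$ and the Lipschitz constant (not on the point). There is nothing spatial or topological to exploit here beyond the bound already proved.

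Concretely, fix $\varepsilon > 0$ and set $\delta := \varepsilon / 2$. By Proposition \ref{propDefectDiffUppEstim}, for any $\vect{x}, \vect{y} \in X$ with $\rho(\vect{x}; \vect{y}) < \delta$ we immediately obtain
\[
\bigl| D(\vect{x}) - D(\vect{y}) \bigr| \lesq 2 \rho(\vect{x}; \vect{y}) < 2 \delta = \varepsilon,
\]
which is exactly the definition of uniform continuity on $X$. The choice of $\delta$ depends only on $\varepsilon$ (the Lipschitz constant $2$ is a global, $\vect{x}$-independent quantity from the previous proposition), so uniformity is automatic.

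There is no real obstacle to anticipate; the entire content of the corollary has been packaged into Proposition \ref{propDefectDiffUppEstim}, and what remains is just the standard one-line observation that a Lipschitz function is uniformly continuous. The only thing worth mentioning is that completeness or properness of $X$ plays no role here: the estimate is pointwise and holds on an arbitrary metric space.
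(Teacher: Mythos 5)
Your proof is correct and is exactly the argument the paper intends: the corollary is stated as an immediate consequence of the Lipschitz bound in Proposition \ref{propDefectDiffUppEstim}, and your explicit choice $\delta = \veps/2$ is the standard one-line verification. Nothing is missing.
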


\begin{proposition}\label{propConcentrationNearZero}
If $\vect{s} \in B = B[\vect{c}; r]$, then
$\forall \delta > 0$ $\exists \veps > 0$: $\vect{x} \in B$, $D(\vect{x}) < \veps$ $\Rarr$
$\rho( \vect{x} ; \vect{s}) < \delta$.
\end{proposition}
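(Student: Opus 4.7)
The plan is to argue by contradiction using compactness, which is available precisely because of the properness assumption $\clubsuit$M1 applied to the closed ball $B = B[\vect{c}; r]$.

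First, I would suppose the claim fails: there exists $\delta_0 > 0$ such that for every $k \in \mathbb{N}$ one can find $\vect{x}_k \in B$ with $D(\vect{x}_k) < 1/k$ and $\rho(\vect{x}_k; \vect{s}) \greq \delta_0$. Since $B$ is a closed ball in a proper space, it is compact, so I may extract a subsequence $\vect{x}_{k_l} \to \vect{x}_0 \in B$ as $l \to \infty$.

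Next, I would invoke two continuity properties. By continuity of $D$ (which is actually Lipschitz on $X$ by Proposition \ref{propDefectDiffUppEstim}), $D(\vect{x}_0) = \lim_{l \to \infty} D(\vect{x}_{k_l}) = 0$, so by Corollary \ref{corDefectSingleZero} (uniqueness of the zero of $D$) we must have $\vect{x}_0 = \vect{s}$. On the other hand, by the continuity of the metric $\rho(\vect{x}_0; \vect{s}) = \lim_{l \to \infty} \rho(\vect{x}_{k_l}; \vect{s}) \greq \delta_0 > 0$, which contradicts $\vect{x}_0 = \vect{s}$. This contradiction yields the desired $\veps$.

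I do not expect any serious obstacle here — the proof is the standard compactness-plus-unique-zero argument, and all three ingredients (compactness of $B$, continuity of $D$, uniqueness of the zero of $D$) have already been established in the excerpt. The only mildly delicate point is noting that the hypothesis $\vect{s} \in B$ is needed just to make the conclusion $\rho(\vect{x}; \vect{s}) < \delta$ meaningful relative to the compactum $B$; properness of $X$ ensures that compactness, and no further structure on $X$ (e.g. linearity or finite dimension) is required. Note also that $\veps$ may a priori depend on $B$ as well as $\delta$, which matches the statement.
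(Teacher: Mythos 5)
Your proposal is correct and is essentially the paper's own argument: the paper sets $S = \{\vect{x} \in B \mid \rho(\vect{x};\vect{s}) \greq \delta\}$, takes $\veps = \inf_{\vect{x}\in S} D(\vect{x})$, and shows $\veps > 0$ by exactly your compactness-plus-continuity-plus-unique-zero contradiction (a sequence in $S$ with $D \to 0$ would converge to a zero of $D$ inside $S$, impossible). The only cosmetic difference is that you run the contradiction against the full statement while the paper packages it as positivity of an infimum over the closed subset $S$; the ingredients and their roles are identical.
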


\begin{proof}
Let $S = \bigl\{ \vect{x} \in B \mid \rho(\vect{x}; \vect{s}) \greq \delta \bigr\}$
and $\veps = \inf\limits_{\vect{x} \in S} D(\vect{x})$.
We claim that $S$ is compact: indeed, $S \subset B$ and $S$ is closed due to continuity of metric.
Since $\forall \vect{x} \in S$: $D(\vect{x}) > 0$ due to Cor. \ref{corDefectSingleZero},
we have $\veps > 0$ (otherwise $\forall k \in \mathbb{N}$ $\exists \vect{x}_k \in S$: $D(\vect{x}_k) < \frac{1}{k}$,
and it follows from compactness of $S$ that
$\exists \{ \vect{x}_{k_l} \}_{l\in \mathbb{N}}$:
$\vect{x}_{k_l} \xrarr{l \rarr \infty}{} \vect{x}_0 \in S$;
$D(\vect{x})$ is continuous, so $D(\vect{x}_{k_l}) \xrarr{l \rarr \infty}{} D(\vect{x}_0)$,
but $0 \lesq D(\vect{x}_{k_l}) < \frac{1}{k_l} \lesq \frac{1}{l}$ $\Rarr$
$D(\vect{x}_0) = \lim\limits_{l \rarr \infty} D(\vect{x}_{k_l}) = 0$ --- a contradiction).

Now, if $\vect{x} \in B$ and $D(\vect{x}) < \veps$, then $\vect{x} \notin S$,
which means $\rho(\vect{x}; \vect{s}) < \delta$.
\end{proof}

\textbf{Test for a ball.} Consider arbitrary closed ball $B = B[\vect{c}; r] \subseteq X$.
If $\vect{s} \in B$, then

\cenlin{$\rho(\vect{r}_i; \vect{c}) - r \lesq \rho(\vect{r}_i; \vect{s}) \lesq
\rho(\vect{r}_i; \vect{c}) + r$, $i = \ovline{1,n}$ $\LRarr$}

\cenlin{$\LRarr$ $t_0 = t_i - \rho(\vect{r}_i; \vect{s}) \in \bigl[ t_i - \rho(\vect{r}_i; \vect{c}) - r;
t_i - \rho(\vect{r}_i; \vect{c}) + r \bigr]$, $i = \ovline{1,n}$}

\noindent
hence $t_0 \in C = \bigcap\limits_{i=1}^n \bigl[ \tau_i(\vect{c}) - r;
\tau_i(\vect{c}) + r \bigr] \ne \nullset$.
It is easy to see that $C \ne \nullset$ if and only if

\cenlin{$2r \greq \max\limits_i \tau_i(\vect{c}) - \min\limits_i \tau_i(\vect{c}) =
\max\limits_{i, j} \bigl| \tau_i(\vect{c}) - \tau_j(\vect{c}) \bigr| =: I(\vect{c})$}

Thus we have the inference:
if $\vect{s} \in B[\vect{c}; r]$,
then $2r \greq I(\vect{c})$.
Conversely, $2r < I(\vect{c})$ $\Rarr$ $\vect{s} \notin B[\vect{c}; r]$.
From $|\tau_i(\vect{c}) - \tau_j(\vect{c})| \lesq I(\vect{c})$ it follows that
$D(\vect{c}) \lesq \frac{1}{n^2} \sum\limits_{1\lesq i,j \lesq n} I(\vect{c}) = I(\vect{c})$, so

\hfill $2r < D(\vect{c})$ $\Rarr$ $\vect{s} \notin B[\vect{c}; r]$ \hfill \equ{eqNegCond}

\noindent
and the condition ``$2r < D(\vect{c})$''
divides the family of all closed balls in $X$ into 2 families:

1) $\mathcal{N}$ (egative) --- the balls that satisfy this condition and thus do not contain $\vect{s}$,

2) $\mathcal{S}$ (uspicious) --- the balls that do not satisfy it.

Of course, even if $B \in \mathcal{S}$, more ``advanced'' tests may prove that $\vect{s} \notin B$.

\begin{proposition}\label{propNegBallSmallEnough}
Suppose $D(\vect{x}) > 0$.
If $\vect{x} \in B = B[\vect{y}; r]$ and $r < \frac{1}{4} D(\vect{x})$, then $B \in \mathcal{N}$.
\end{proposition}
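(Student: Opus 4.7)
The plan is to chase the definition of $\mathcal{N}$: by the ``Test for a ball'' paragraph, a closed ball $B[\vect{y}; r]$ lies in $\mathcal{N}$ precisely when $2r < D(\vect{y})$, so it suffices to establish this inequality under the hypotheses.

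First I would invoke Proposition \ref{propDefectDiffUppEstim} at the pair $(\vect{x}, \vect{y})$, which gives $|D(\vect{x}) - D(\vect{y})| \lesq 2\rho(\vect{x}; \vect{y})$. Since $\vect{x} \in B[\vect{y}; r]$ means $\rho(\vect{x}; \vect{y}) \lesq r$, this yields the lower bound $D(\vect{y}) \greq D(\vect{x}) - 2r$.

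Then I would plug in the size assumption $r < \tfrac{1}{4} D(\vect{x})$, i.e. $D(\vect{x}) > 4r$, to obtain $D(\vect{y}) > 4r - 2r = 2r$. This is exactly the condition defining $\mathcal{N}$ in \equref{eqNegCond}, hence $B \in \mathcal{N}$ as required. The positivity assumption $D(\vect{x}) > 0$ is merely there to guarantee that the radius $r$ is genuinely positive (otherwise the inequality $r < \tfrac{1}{4} D(\vect{x})$ would be vacuous or force $r \lesq 0$), so it does not play an essential role in the estimate itself.

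There is no real obstacle here: the argument is a direct two-line combination of the Lipschitz estimate for the defect with the definition of $\mathcal{N}$, and the factor $\tfrac{1}{4}$ in the hypothesis is exactly calibrated so that the Lipschitz slack ($2r$ on each side in $|D(\vect{x}) - D(\vect{y})| \lesq 2r$ versus the threshold $2r$ for $\mathcal{N}$) balances out.
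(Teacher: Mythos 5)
Your proposal is correct and follows essentially the same route as the paper: apply the Lipschitz bound of Proposition \ref{propDefectDiffUppEstim} to get $D(\vect{y}) \greq D(\vect{x}) - 2r > 2r$, then invoke \equref{eqNegCond}. The paper phrases the arithmetic slightly differently (first deriving $D(\vect{y}) > \frac{1}{2}D(\vect{x})$ and then comparing $r < \frac{1}{4}D(\vect{x}) < \frac{1}{2}D(\vect{y})$), but the substance is identical, and your observation about the role of the hypothesis $D(\vect{x}) > 0$ is accurate.
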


\begin{proof}
By Prop. \ref{propDefectDiffUppEstim},
$\rho(\vect{y}; \vect{x}) \lesq r < \frac{1}{4} D(\vect{x})$ $\Rarr$
$|D(\vect{y}) - D(\vect{x})| < \frac{1}{2}D(\vect{x})$ $\Rarr$ $D(\vect{y}) > \frac{1}{2} D(\vect{x}) > 0$.

Since $r < \frac{1}{4} D(\vect{x}) < \frac{1}{2} D(\vect{y})$, \equref{eqNegCond} implies $B \in \mathcal{N}$.
\end{proof}

\noindent
--- if $D(\vect{x}) > 0$, then the suspicious balls that are small enough do not contain $\vect{x}$.

{\ }

\textbf{Refining cover.} Consider any $B = B[\vect{z}; r]$.
$\mathcal{A}_0 = \bigl\{ B(\vect{c}; \frac{r}{2}) \mid \vect{c} \in B \bigr\}$ is a cover of $B$ by open sets.
From $\clubsuit$M1 it follows that there exists a finite subcover
$\mathcal{A}_1 = \bigl\{ B(\vect{c}_i; \frac{r}{2}) \mid i=\ovline{1,N} \bigr\} \subseteq \mathcal{A}_0$,
which also covers $B$.
Clearly, $\mathcal{B} = \bigl\{ B[\vect{c}_i; \frac{r}{2}] \mid i=\ovline{1,N} \bigr\}$
is a finite cover of $B$ too. Thus we obtained

\begin{proposition}\label{propRefinCoverMetric}
For any $B = B[\vect{z}; r]$ there exists a finite cover
$\mathcal{B} = \mathcal{B}(B) = \bigl\{ B[\vect{c}_i; \frac{r}{2}] \mid i=\ovline{1,N} \bigr\}$ of $B$,
and $\forall B[\vect{c}; \frac{r}{2}] \in \mathcal{B}$: $\rho(\vect{c}; \vect{z}) \lesq r$.
\end{proposition}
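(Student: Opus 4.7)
The plan is to unpack the argument already sketched in the paragraph preceding the proposition and check that each step is justified by the standing assumptions. The target is a two-part statement: existence of the finite cover $\mathcal{B}$ by closed balls of radius $r/2$, and the bound $\rho(\vect{c}_i;\vect{z})\lesq r$ on each center. Both fall out of compactness of $B$ together with the trivial observation that $\vect{c}\in B$ means exactly $\rho(\vect{c};\vect{z})\lesq r$.

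First I would form the tautological open cover
\[
\mathcal{A}_0 = \bigl\{ B(\vect{c}; \tfrac{r}{2}) \mid \vect{c} \in B \bigr\},
\]
which covers $B$ because every $\vect{c}\in B$ lies in its own open ball $B(\vect{c};r/2)\in\mathcal{A}_0$. Next I would invoke $\clubsuit$M1: the closed ball $B=B[\vect{z};r]$ is compact, hence by the characterization of compactness via open covers (stated in the preliminaries) there is a finite subcover
\[
\mathcal{A}_1 = \bigl\{ B(\vect{c}_i; \tfrac{r}{2}) \mid i=\ovline{1,N}\bigr\}\subseteq\mathcal{A}_0.
\]
By construction each $\vect{c}_i$ is a point of $B$, so $\rho(\vect{c}_i;\vect{z})\lesq r$, which is the second conclusion.

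Finally I would pass from open to closed balls: since $B(\vect{c}_i;r/2)\subseteq B[\vect{c}_i;r/2]$ for each $i$, the family $\mathcal{B}=\bigl\{ B[\vect{c}_i;\tfrac{r}{2}]\mid i=\ovline{1,N}\bigr\}$ still covers $B$, and it is still finite. That is all that is required.

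There is no real obstacle here; the only thing to verify carefully is that the proper-space hypothesis is what lets us extract the finite subcover (if we only had completeness we could not). The proof is essentially a one-line application of Heine–Borel to the tautological cover by half-radius open balls, followed by the trivial enlargement to closed balls.
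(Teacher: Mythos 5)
Your proposal is correct and follows exactly the paper's own route: the tautological open cover $\{B(\vect{c};\frac{r}{2})\mid\vect{c}\in B\}$, a finite subcover via compactness of $B[\vect{z};r]$ from $\clubsuit$M1, the observation that each center lies in $B$ so $\rho(\vect{c}_i;\vect{z})\lesq r$, and the harmless enlargement to closed balls. Nothing is missing.
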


Naturally, we want $N$ to be as small as possible;
however, the time spent in the (intricate) positioning of less balls
can exceed the time gained by not testing more balls.
This topic is omitted here; cf. \cite{dumer2007}, \cite[2]{goodman2018}.
If $\exists N_d \in \mathbb{N}$ such that any $B[\vect{z}; r]$ can be covered by at most $N_d$ closed balls
of radius $\frac{r}{2}$, then $(X; \rho)$ is called \textit{doubling},
and $N_d$ is its \textit{doubling constant} (\cite[10.13, p. 81]{heinonen2001}).


For the method at hand we can weaken $\rho(\vect{c}; \vect{z}) \lesq r$
to $\rho(\vect{c}; \vect{z}) \lesq K_1 r$, $K_1 \greq 1$
\hfill \equ{eqCoverDistCenters}

{\ }

When $(X;\rho)$ has some additional properties, e.g. its points can be provided with coordinates,
we are able to make $\mathcal{B}$ more ``constructively'';
in particular, next section describes this procedure in the finite-dimensional normed spaces.
Another example is the Riemannian manifolds with intrinsic metric
(see \cite[7.1, 8.1]{deza2009}, \cite[3.3]{petersen2006}),
optionally immersed in $\mathbb{R}^m_2$.

{\ }

Let $\mathcal{B}_0 = \{ B[\vect{z};r] \}$ and $\mathcal{B}_1 = \mathcal{B}(B[\vect{z}; r])$.
We then denote by $\mathcal{B}_2$ the union of the covers of all balls from $\mathcal{B}_1$, \textellipsis,
by $\mathcal{B}_k$ the union of the covers of all balls from $\mathcal{B}_{k-1}$, \textellipsis

At that, the balls in $\mathcal{B}_k$ are of radius $\frac{r}{2^k}$ and
$\bigcup\limits_{B \in \mathcal{B}_k} B \subseteq \bigcup\limits_{B \in \mathcal{B}_{k+1}} B$.

\begin{proposition}\label{propAllCoveringsInBall}
Let $\mathcal{B}_{\infty} = \bigcup\limits_{k=0}^{\infty} \mathcal{B}_k$.
Then $\bigcup\limits_{B \in \mathcal{B}_{\infty}} B \subseteq B[\vect{z}; Kr]$, where $K = 2K_1 + 1$.
\end{proposition}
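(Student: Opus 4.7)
The plan is to show, by induction on the refinement level $k$, that every centre appearing in $\mathcal{B}_k$ stays within a distance independent of $k$ from $\vect{z}$, and then to add the (small) radius of the ball in question to reach any of its points. The key observation is that, although a single refinement step can displace a centre by up to $K_1$ times the current radius, the radii halve at each level, so the cumulative displacement forms a convergent geometric series.

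First I would formalize the chain of ancestors. By construction of $\mathcal{B}_k$ as the union of the covers of the balls in $\mathcal{B}_{k-1}$, any $B[\vect{c}_k; \frac{r}{2^k}] \in \mathcal{B}_k$ (for $k \greq 1$) belongs to $\mathcal{B}(B[\vect{c}_{k-1}; \frac{r}{2^{k-1}}])$ for some $B[\vect{c}_{k-1}; \frac{r}{2^{k-1}}] \in \mathcal{B}_{k-1}$. Iterating backwards yields a chain $\vect{z} = \vect{c}_0, \vect{c}_1, \ldots, \vect{c}_k$ of centres; the relaxed bound \equref{eqCoverDistCenters} applied at each step gives $\rho(\vect{c}_j; \vect{c}_{j-1}) \lesq K_1 \cdot \frac{r}{2^{j-1}}$ for $j = 1, \ldots, k$.

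Next I would telescope along this chain by the triangle inequality and sum the resulting geometric series:
\cenlin{$\rho(\vect{c}_k; \vect{z}) \lesq \sum\limits_{j=1}^{k} \rho(\vect{c}_j; \vect{c}_{j-1}) \lesq K_1 r \sum\limits_{j=1}^{k} \frac{1}{2^{j-1}} < 2 K_1 r.$}
Then for any $\vect{x} \in B[\vect{c}_k; \frac{r}{2^k}]$ one more triangle inequality yields
\cenlin{$\rho(\vect{x}; \vect{z}) \lesq \frac{r}{2^k} + \rho(\vect{c}_k; \vect{z}) < r + 2 K_1 r = (2 K_1 + 1) r = K r,$}
which is exactly the desired containment $\bigcup\limits_{B \in \mathcal{B}_\infty} B \subseteq B[\vect{z}; K r]$, since $\mathcal{B}_\infty = \bigcup\limits_{k=0}^{\infty} \mathcal{B}_k$ and the bound is uniform in $k$.

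There is no substantive obstacle here; the whole argument collapses to a geometric-series calculation. The only thing to be careful about is that the slack constant $K_1 \greq 1$ from \equref{eqCoverDistCenters} contributes at \emph{every} refinement level, but the halving of radii ensures the cumulative displacement of centres is bounded uniformly by $2K_1 r$, so the ball of radius $(2K_1 + 1)r$ does in fact swallow every refinement.
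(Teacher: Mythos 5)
Your proposal is correct and follows essentially the same route as the paper: trace the chain of ancestor centres back to $\vect{z}$, bound each step by \equref{eqCoverDistCenters} applied to the parent's radius, telescope via the triangle inequality into a geometric series summing to at most $2K_1 r$, and add the final radius to reach $(2K_1+1)r = Kr$. No discrepancies to report.
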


\begin{proof}
By construction, $\forall B' = B[\vect{z}_k; \frac{r}{2^k}] \in \mathcal{B}_k$
$\exists B'' = B[\vect{z}_{k-1}; \frac{r}{2^{k-1}}] \in \mathcal{B}_{k-1}$:
$B' \in \mathcal{B}(B'')$, so by \equref{eqCoverDistCenters}:
$\rho(\vect{z}_k; \vect{z}_{k-1}) \lesq K_1 \cdot \frac{r}{2^{k-1}} = 2K_1 r \cdot \frac{1}{2^k}$.
Therefore

\cenlin{$\rho(\vect{z}_k; \vect{z}) \lesq \rho(\vect{z}_k; \vect{z}_{k-1}) + \rho(\vect{z}_{k-1}; \vect{z}) \lesq
2K_1 r \cdot \frac{1}{2^k} + \rho(\vect{z}_{k-1}; \vect{z}) \lesq$}

\cenlin{$\lesq 2K_1 r \cdot \frac{1}{2^k} + 2K_1 r \cdot \frac{1}{2^{k-1}} + \rho(\vect{z}_{k-2}; \vect{z}) \lesq \ldots \lesq
2K_1 r \sum\limits_{i=1}^k 2^{-i} + \rho(\vect{z}; \vect{z}) \lesq 2K_1 r$}

Now, $\forall \vect{x} \in \bigcup\limits_{B \in \mathcal{B}_{\infty}} B$:
$\exists B[\vect{z}_k; \frac{r}{2^k}] \ni \vect{x}$ for some $k$, hence

\hfill $\rho(\vect{x}; \vect{z}) \lesq \rho(\vect{x}; \vect{z}_k) + \rho(\vect{z}_k; \vect{z}) \lesq
\frac{r}{2^k} + 2K_1 r \lesq (2K_1 + 1) r$ \hfill
\end{proof}

\subsection{Method}

\forceparindent
We add one more assumption for the sake of simplicity:

{\ }

$\clubsuit$3. At least one ball $B_{0,1} = B[\vect{c}_{0,1}; r] \ni \vect{s}$ is known.

{\ }

That means the ball being ``big enough'' to contain all possible positions of the source.

{\ }

\textbf{Step 0.} Let $k = 0$, $\mathcal{C}_0 = \{ B_{0,1} \}$, and $r_0 = r$.

{\ }

\textbf{Step 1.} Let $\mathcal{C}_{k+1} = \nullset$.

For each ball $B = B[\vect{y}; r_k] \in \mathcal{C}_k$, $r_k = \frac{r}{2^k}$,
there is the cover $\mathcal{B}$ of $B$, which
consists of the balls $B' = B[\vect{c}; r_{k+1}]$, $r_{k+1} = \frac{1}{2} r_k = \frac{r}{2^{k+1}}$.

Consider each $B'$ in turn and apply test \equref{eqNegCond} to $B'$.
If $B' \in \mathcal{S}$, then add $B'$ to $\mathcal{C}_{k+1}$.

{\ }

\textbf{Step 2.} Let $\vect{z}_{k+1}$ be the center of the arbitrarily chosen ball from $\mathcal{C}_{k+1}$.

{\ }

\textbf{Step 3.} $k := k + 1$, goto Step 1.

{\ }

It follows from $\vect{s} \in B_{0, 1} \subseteq \bigcup\limits_{B \in \mathcal{B}(B_{0, 1})} B$
that at least one ball from $\mathcal{B}(B_{0,1})$ contains $\vect{s}$,
this ball belongs to $\mathcal{S}$ and therefore $\mathcal{C}_1 \ne \nullset$.
Similarly, at least one ball from $\bigcup\limits_{B \in \mathcal{C}_1} \mathcal{B}(B)$
contains $\vect{s}$, implying $\mathcal{C}_2 \ne \nullset$, etc.:
$\forall k\in \mathbb{Z}_+$ $\mathcal{C}_k \ne \nullset$.
Hence these steps define the infinite sequence of the covers $\{ \mathcal{C}_k \}_{k=0}^{\infty}$
and the infinite sequence of the centers $\{ \vect{z}_k \}_{k=1}^{\infty}$.

\begin{proposition}
$\vect{z}_k \xrarr{k \rarr \infty}{} \vect{s}$.
\end{proposition}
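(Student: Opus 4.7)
The plan is to combine three ingredients already assembled in the excerpt: the fact that every ball kept in $\mathcal{C}_k$ is suspicious (so its center has small defect), the containment of all centers in a single compact region, and Prop. \ref{propConcentrationNearZero}, which converts smallness of defect into proximity to $\vect{s}$.

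First I would establish the defect bound for the chosen centers. By Step 1 of the method, $\mathcal{C}_k \subseteq \mathcal{S}$ for every $k \greq 1$, so the negative test \equref{eqNegCond} fails at each $\vect{z}_k$; this is precisely $D(\vect{z}_k) \lesq 2 r_k = r \cdot 2^{1-k}$, which $\rarr 0$ as $k \rarr \infty$.

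Next I would confine the whole sequence to a fixed compact ball. A trivial induction yields $\mathcal{C}_k \subseteq \mathcal{B}_k$ (the filtering in Step 1 only discards balls), so each $\vect{z}_k \in \bigcup_{B \in \mathcal{B}_{\infty}} B$. Applying Prop. \ref{propAllCoveringsInBall} to $B_{0,1} = B[\vect{c}_{0,1}; r]$ then forces every $\vect{z}_k$ into $B^* := B[\vect{c}_{0,1}; Kr]$. By $\clubsuit$M1, $B^*$ is compact, and clearly $\vect{s} \in B_{0,1} \subseteq B^*$.

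With these two facts in hand, fix $\delta > 0$ and apply Prop. \ref{propConcentrationNearZero} on $B^*$: there exists $\veps > 0$ such that $\vect{x} \in B^*$ with $D(\vect{x}) < \veps$ implies $\rho(\vect{x}; \vect{s}) < \delta$. Choosing $k_0$ with $2 r_{k_0} < \veps$ then gives $D(\vect{z}_k) \lesq 2 r_k < \veps$, hence $\rho(\vect{z}_k; \vect{s}) < \delta$, for every $k \greq k_0$; so $\vect{z}_k \rarr \vect{s}$. I do not anticipate any serious obstacle — the real work has already been done in Props. \ref{propConcentrationNearZero} and \ref{propAllCoveringsInBall}; the only bookkeeping item is the inclusion $\mathcal{C}_k \subseteq \mathcal{B}_k$ needed to invoke the second of these, and that is immediate from how $\mathcal{C}_{k+1}$ is built by pruning the covers $\mathcal{B}(\cdot)$ of the balls in $\mathcal{C}_k$.
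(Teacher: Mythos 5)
Your proposal is correct and follows essentially the same route as the paper's proof: confine the centers to the compact ball $B[\vect{c}_{0,1}; Kr]$ via Prop.~\ref{propAllCoveringsInBall}, read off $D(\vect{z}_k) \lesq 2r_k$ from membership in $\mathcal{S}$, and convert small defect into proximity to $\vect{s}$ via Prop.~\ref{propConcentrationNearZero}. Your explicit remark that $\mathcal{C}_k \subseteq \mathcal{B}_k$ (pruning only discards balls) is a small bookkeeping point the paper leaves implicit, but the argument is the same.
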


\begin{proof}
By Prop. \ref{propAllCoveringsInBall},
$\{ \vect{z}_k \}_{k \in \mathbb{N}} \subseteq \widehat{B} =  B[\vect{c}_{0,1}; Kr]$
and $\vect{s} \in B_{0,1} \subseteq \widehat{B}$.	

Take any $\delta > 0$. By Prop. \ref{propConcentrationNearZero},
$\exists \veps > 0$: $\vect{x} \in \widehat{B}$, $D(\vect{x}) < \veps$ $\Rarr$
$\rho(\vect{x}; \vect{s}) < \delta$.

Since $r_k = \frac{r}{2^k} \xrarr{k \rarr \infty}{} 0$, we see that $\exists k_0$:
$\forall k \greq k_0$ $r_k < \frac{1}{2} \veps$.
By construction, $\forall B = B[\vect{c}; r_k] \in \mathcal{C}_k$: $B \in \mathcal{S}$, so
$D(\vect{c}) \lesq 2 r_k < \veps$. Hence $\rho(\vect{c}; \vect{s}) < \delta$;
in particular, $\rho(\vect{z}_k; \vect{s}) < \delta$.
\end{proof}

In practice, however, we would like to know when to halt this process.
We need a discernible ``sign'' that $\vect{z}_k$ is close enough to $\vect{s}$,
$\rho(\vect{z}_k; \vect{s}) < \delta$ for the preselected precision $\delta$.
We do not rely on the condition $r_k < \frac{1}{2} \veps$, because $\veps$ is, in a sense,
unknown --- defined ``not constructively enough''; put differently, the convergence rate is unknown.

To attain this goal, we add the stopping criterion to Step 3 and replace it by

{\ }

\textbf{Step 3'.} $k := k + 1$.
If 1) $\rho(\vect{c}'; \vect{c''}) < \frac{2}{3} \delta$ for any
$B[\vect{c}'; r_k]$, $B[\vect{c}''; r_k] \in \mathcal{C}_k$
and 2) $r_k < \frac{1}{3} \delta$, then halt; else goto Step 1.

{\ }

By Prop. \ref{propConcentrationNearZero}, $\exists \veps > 0$:
$\vect{x} \in \widehat{B}$, $D(\vect{x}) < \veps$ $\Rarr$ $\rho(\vect{x}; \vect{s}) < \frac{1}{3} \delta$.
When $r_k = \frac{r}{2^k} < \frac{1}{2}\veps$, for two balls $B[\vect{c}'; r_k]$, $B[\vect{c}''; r_k]$
to be in $\mathcal{C}_k \subseteq \mathcal{S}$ it is necessary that
$D(\vect{c}'), D(\vect{c}'') \lesq 2 r_k < \veps$, thus
$\rho(\vect{c}'; \vect{c}'') \lesq \rho(\vect{c}'; \vect{s}) + \rho(\vect{s}; \vect{c}'') <
\frac{2}{3} \delta$, --- for big enough $k$ the condition (1) holds.

Obviously, the condition (2) holds when $\frac{r}{2^k} < \frac{1}{3} \delta$
$\LRarr$ $k > \log_2 \frac{3r}{\delta}$.

As soon as the process reaches $k$ such that both conditions hold and halts,
$\vect{z}_k$ is the sought approximation of $\vect{s}$:
suppose $\vect{s} \in B[\vect{c}; r_k] \in \mathcal{C}_k$, then

\cenlin{$\rho(\vect{z}_k; \vect{s}) \lesq \rho(\vect{z}_k; \vect{c}) + \rho(\vect{c}; \vect{s}) <
\frac{2}{3} \delta + \frac{1}{3} \delta = \delta$}

\section{SR in finite-dimensional normed spaces}

\forceparindent
Now we denote by $(X; \| \cdot \|)$ the normed space over the field $\mathbb{R}$ of real numbers.
$\vect{\theta}$ is the zero of $X$ as linear vector space.

We apply the same ``refining cover by defect'' (RCD) method to approximate $\vect{s}$,
only the RC itself becomes more ``constructible''
due to the usage of bases and coordinates.
Most of the reasonings above for metric spaces remain though,
with usual $\rho(\vect{x}; \vect{y}) = \| \vect{x} - \vect{y} \|$.

We keep the constraints $\clubsuit$1--3. As for $\clubsuit$M1, it is provided by

{\ }

$\clubsuit$N1. $X$ is finite-dimensional: $\dim X = m \in \mathbb{N}$.

{\ }

$\bullet$ $X$ is a complete (Banach) space.

$\bullet$ If $A \subseteq X$ is closed and bounded ($A \subseteq B[\vect{\theta}; R]$), then $A$ is compact.

In particular, any closed ball is compact --- $\clubsuit$M1.

$\bullet$ If $L$ is a (linear) subspace of $X$, $L < X$, then $L$ is a closed subspace.

$\bullet$ If $L < X$ and $\vect{x} \notin L$, then
$\rho(\vect{x};L) = \inf\limits_{\vect{u} \in L} \| \vect{x} - \vect{u} \| > 0$
and $\exists \vect{h} \in L$: $\| \vect{x} - \vect{h} \| = \rho(\vect{x}; L)$.

{\ }

In principle, we could take any normalized basis $E = \{ \vect{e}_j \}_{j=1}^m$ of $X$
(that is, $\| \vect{e}_j \| \equiv 1$, $E$ is linearly independent, and
$X = L(E) = \bigl\{ \sum\limits_{j=1}^m x_j \vect{e}_j \mid
x_j \in \mathbb{R}, j=\ovline{1,m} \bigr\}$).
However, for the sake of optimization of the refining cover
we prefer the so-called \textit{Auerbach bases}.

We denote by $X^*$ the \textit{dual}, or \textit{adjoint}, space of $X$, that is,
the space of all linear bounded functionals $f \colon X \rarr \mathbb{R}$.
$\| \cdot \|_*$ is the norm of $X^*$.

$\bullet$ $\forall f \in X^*$, $\forall \vect{x} \in X$: $| f(\vect{x}) | \lesq \| f \|_* \cdot \| \vect{x} \|$.

{\ }

\textbf{Auerbach theorem}
(\cite{auerbach1929}, \cite[20.12]{jameson1974}).
\textit{There exist $\{ \vect{e}_j \}_{j=1}^m \subset X$ and $\{ f_j \}_{j=1}^m \subset X^*$ such that
$\| \vect{e}_j \| = \| f_j \|_* = 1$, $j=\ovline{1,m}$ \emph{(normality)},
and $f_i(\vect{e}_j) = \delta_{ij}$, $i, j = \ovline{1, m}$ \emph{(biorthogonality)}.}

{\ }

Let $\mathbb{E} = \{ \vect{e}_j \}_{j=1}^m$ from Auerbach theorem.
$\forall \vect{e}_j$, $\forall \vect{u} = \sum\limits_{i \ne j} u_i \vect{e}_i \in
L_{-j} = L(\mathbb{E} \backsl \{ \vect{e}_j \})$ we have

\cenlin{$1 = |\delta_{jj} - 0| = |f_j(\vect{e}_j) - \sum\limits_{i\ne j} u_i f_j(\vect{e}_i) | =
	| f_j(\vect{e}_j - \vect{u}) | \lesq \| f_j \|_* \cdot \| \vect{e}_j - \vect{u} \| = \| \vect{e}_j - \vect{u} \|$}

\noindent
thus $\rho (\vect{e}_j; L_{-j}) \greq 1$.
On the other hand, $\vect{\theta} \in L_{-j}$ and $\| \vect{e}_j - \vect{\theta} \| = 1$, so

\hfill $\rho (\vect{e}_j; L_{-j}) = \| \vect{e}_j \| = 1$, $j = \ovline{1, m}$ \hfill \equ{eqOrthonormBasis}

\noindent
--- in addition to being the normalized basis of $X$ ($\mathbb{E}$ is linearly independent
and $|\mathbb{E}| = \dim X$) this Auerbach basis $\mathbb{E}$ has the ``orthogonality'' property.

See also \cite[11.1, p. 517--519]{montesinos2015}.
If we have some non-Auerbach basis $E$ of $X$ and want to ``construct'', or approximate,
the Auerbach one $\mathbb{E}$ (i.e. calculate the coordinates of $\vect{e}_j \in \mathbb{E}$ in $E$)
using the referenced ``canonical'' proof, which involves the maximization of the determinant,
then we can search for that maximum in the $m^2$-dimensional space
of the coordinates of the $m$-tuples of the points on $\{ \vect{x} \in X \colon \| \vect{x} \| = 1 \}$, ---
a complicated task as $m$ increases;
on the other hand, we perform this search only once for given $(X; \| \cdot \|)$.

{\ }

\textbf{Refining cover.}
We describe (or just recall) the cover of $B = B[\vect{\theta}; 1]$
by the ``lattice'' of the closed balls of radius $\frac{1}{2}$; cf. \cite[2.2, 6.3]{fowkes2013}.
Let $\vect{x} \in B$ and $\vect{x} = \sum\limits_{j=1}^m x_j \vect{e}_j$.
For $x_j \ne 0$

\cenlin{$1 \greq \| \vect{x} \| = |x_j| \cdot \| \vect{e}_j + \sum\limits_{i \ne j} \frac{x_i}{x_j} \vect{e}_i \|$}

Since $\bigl( -\sum\limits_{i \ne j} \frac{x_i}{x_j} \vect{e}_i \bigr) \in L_{-j} =
L\bigl(\{ \vect{e}_1; \ldots ; \vect{e}_{j-1}; \vect{e}_{j+1}; \ldots; \vect{e}_m \}\bigr)$, we obtain
$1 \greq |x_j| \cdot \rho(\vect{e}_j; L_{-j})$.

By construction of $\mathbb{E}$, we have \equref{eqOrthonormBasis}: $\rho(\vect{e}_j; L_{-j}) = 1$, therefore $|x_j| \lesq 1$.

Let $c_i = -1 + \frac{i}{m}$, $i = \ovline{0, 2m}$: we break $[-1; 1]$
into the segments $[c_i; c_{i+1}]$ of length $\frac{1}{m}$.

Consider the set of the balls $\mathcal{B} = \bigl\{ B[\vect{c}; \frac{1}{2}] \mid
\vect{c} = \sum\limits_{j=1}^m c_{i_j} \vect{e}_j, \: i_j = \ovline{0, 2m}, \: j=\ovline{1, m} \bigr\}$.
There are $(2m + 1)^m$ of them, and we instantly remove from $\mathcal{B}$
the balls $B' = B[\vect{c}; \frac{1}{2}]$ such that $\| \vect{c} \| > \frac{3}{2}$,
because $B' \cap B = \nullset$ then ($\exists \vect{x} \in B' \cap B$ $\Rarr$
$\| \vect{c} \| \lesq \| \vect{c} - \vect{x} \| + \| \vect{x} \| \lesq \frac{1}{2} + 1$).

We claim that $\forall \vect{x} \in B$ $\exists \vect{c}$:
$\vect{x} \in B[\vect{c}; \frac{1}{2}] \in \mathcal{B}$.
To obtain such $\vect{c}$, we take $c_{i_j}$ that is closest to $x_j$
($i_j = \mathrm{rnd} (m(1 + x_j))$, where
$\mathrm{rnd}(x) = \lfloor x \rfloor + \lfloor 2\{ x \} \rfloor = \lfloor 2x \rfloor - \lfloor x \rfloor$),
then $|x_j - c_{i_j}| \lesq \frac{1}{2m}$ and

\cenlin{$\| \vect{x} - \vect{c} \| \lesq \sum\limits_{j=1}^m | x_j - c_{i_j} | \cdot \| \vect{e}_j \| \lesq
\sum\limits_{j=1}^m \frac{1}{2m} = \frac{1}{2}$} 

Thus $\mathcal{B}$ is the cover of $B$.

Analogously, scaled and translated $\breve{\mathcal{B}} = \vect{z} + r\mathcal{B} =
\bigl\{ B[\vect{z} + \vect{c}; \frac{r}{2}] \colon B[\vect{c}; \frac{1}{2}] \in \mathcal{B} \bigr\}$
is the sought cover of $B[\vect{z}; r]$,
and $\forall B[\vect{z}'; \frac{r}{2}] \in \breve{\mathcal{B}}$: $\| \vect{z}' - \vect{z} \| \lesq \frac{3}{2} r$
($\sim$ Prop. \ref{propRefinCoverMetric}).

\section*{Remarks}

\forceparindent
\textbf{1. Defects.} Other defect functions

\cenlin{$D_1(\vect{x}) = \frac{2}{n^2} \sum\limits_{1 \lesq i < j \lesq n} \bigl| \tau_i(\vect{x}) - \tau_j(\vect{x}) \bigr|$,
$D_2(\vect{x}) = \frac{1}{n} \sum\limits_{i=1}^n \bigl[ \tau_i(\vect{x}) -
\frac{1}{n} \sum\limits_{j=1}^n \tau_j(\vect{x}) \bigr]^2$,}

\cenlin{$D_I(\vect{x}) = I(\vect{x}) = \max\limits_i \tau_i(\vect{x}) - \min\limits_i \tau_i(\vect{x}) =
\max\limits_{1 \lesq i < j \lesq n} \bigl| \tau_i(\vect{x}) - \tau_j(\vect{x}) \bigr|$,}

\cenlin{\textellipsis}

\noindent
have the properties similar to those of $D(\cdot)$. For instance,

\cenlin{$\bigl| D_2(\vect{x}) - D_2(\vect{y}) \bigr| \lesq 8 M \rho(\vect{x}; \vect{y})$}

\noindent
where $M = \max\limits_{i, j} \rho(\vect{r}_i; \vect{r}_j)$.

{\ }

\textbf{2. Issues with gradient method} (GM) of searching for the minima
of the defect function $f_D(\vect{x})$,
which starts at the initial point $\vect{x}_0$ and ``moves'' in the direction
of the \textit{steepest descent} (another name of this method) of $f_D(\vect{x})$;
see \cite[6.6]{byrne2008}, \cite[25]{ostrowski1973}.

Obviously, $\vect{s}$ is a local (and global) minimum, locmin, of $f_D(\cdot)$.
However, in general case there can be more than one locmin,
even without disturbances caused by noise
(cf. \cite[5.2]{compagnoni2017}, \cite[9.4]{huang2004}):

\begin{example}\label{exmpOtherLocMin}
Consider the defect $D_2(\cdot)$,
which is also the variance of the random variable with equiprobable values $\tau_i(\vect{x})$.
Let $(X; \rho) = \mathbb{R}^2_2$, $\vect{s} = (0; 0)$, and the sensors
	
\cenlin{$\vect{r}_1 = (8; 6)$, $\vect{r}_2 = (5; 5)$, $\vect{r}_3 = (-2; 6)$,
$\vect{r}_4 = (-6; 4)$, $\vect{r}_5 = (-10; 2)$}
\end{example}

\begin{exampledescript}
``Numerical experiments'' show that $D_2(\vect{x})$ has locmin at
$\vect{b} \approx (-3.6901; 21.5627)$, at that $D_2(\vect{b}) \approx 0.69044$.
Therefore we cannot start GM at arbitrary initial point
to search for the solution ($\vect{r}_5 = 2\vect{r}_4 - \vect{r}_3$,
so \cite[Prop. 4]{goncharov2018} implies the uniqueness of the solution $\vect{s}$).
\end{exampledescript}

Similar configurations exist for higher dimensionalities.

{\ }

Moreover, GM that starts at the sensor nearest to $\vect{s}$ can
converge to the locmin $\vect{b} \ne \vect{s}$:

\begin{example}\label{exmpOtherLocMinFromNearest}
Let $(X; \rho) = \mathbb{R}^2_2$, $\vect{s} = (0; 0)$, and the sensors
	
\cenlin{$\vect{r}_1(1.885; 0.014)$, $\vect{r}_2(2.523; -0.76)$, $\vect{r}_3(2.552; -0.756)$,
$\vect{r}_4(2.94; -0.78)$, $\vect{r}_5(2.081; 0.986)$}
\end{example}

\begin{exampledescript}
GM with the initial point $\vect{r}_1$, which is nearest to $\vect{s}$,
converges to the locmin of $D_2(\vect{x})$ at
$\vect{b} \approx (2.039; 0.253)$, $D_2(\vect{b}) \approx 0.00318$.
\end{exampledescript}

Again, similar behaviour can occur in $\mathbb{R}^m_2$ for $m > 2$.

{\ }

\textbf{3. Towards Noise.} When, instead of exact $t_i$,
we know only ``shifted'' $\widehat{t}_i = t_i + \xi_i$
and $\widehat{\tau}_i (\vect{x}) = \widehat{t}_i - \rho(\vect{x}; \vect{r}_i)$
(noises $\xi_i$ are supposed to be random variables with certain properties),
$\widehat{D} = \frac{1}{n^2} \sum\limits_i \bigl| \sum\limits_j \bigl[ \widehat{\tau}_i - \widehat{\tau}_j \bigr] \bigr|$
is ``distorted''; we lose Prop. \ref{propDefectZeroSol}, Prop. \ref{propConcentrationNearZero},
and what is built on top of them.
Maybe $\widehat{D}$ has a continuum of zeros, maybe none.

If we opt to keep using the root finding approach rather than the optimization one,
then the following crude, vaguely described ``trick'' may be applied,
assuming $|\xi_i| \lesq \gamma$ for small enough $\gamma$: consider
$\wtilde{D} = |\widehat{D} - 2 \gamma|$.

It is easy to see that $|\widehat{D} - D| \lesq 2\gamma$,
thus $\widehat{D}(\vect{s}) - 2\gamma \lesq 0$, while at some distant $\vect{x}$
presumably $\widehat{D}(\vect{x}) - 2\gamma > 0$; $\wtilde{D}$ has zeros.
Like $\widehat{D}$, $\wtilde{D}$ is a ``distortion'' of $D$,
but due to $\gamma \approx 0$ this distortion should be small enough
for the (continuum of) zeros of $\wtilde{D}$ to be ``not too far'' from $\vect{s}$.
These zeros form the closed (topologically and geometrically)
``surface(s)'' $Z = \{ \vect{x} \in X \mid \wtilde{D}(\vect{x}) = 0 \}$ around or near $\vect{s}$;
in a sense, the distortion ``inflates'' single zero, turning it into surface(s).

Prop. \ref{propDefectDiffUppEstim} remains valid for $\wtilde{D}$,
and instead of Prop. \ref{propConcentrationNearZero} we'll have its analogue
with $\rho(\vect{x}; \vect{s})$ replaced by $\rho(\vect{x};Z)$.
We use the test ``$2r_k < \wtilde{D}(\vect{c})$'', and the refining cover
$\{ \mathcal{C}_k \}_{k=0}^{\infty}$ constisting of $B[\vect{c}; r_k]$
that do not satisfy this inequality ``converges'' to $Z$,
which stays mostly within the union of the balls from $\mathcal{C}_k$.
At some iteration we halt and take some point ``between'' the centers of these balls
(their mean in normed space, for instance)\textellipsis{} and rely on this point being close enough to $\vect{s}$.

The locmins cause one of evident drawbacks of this trick:
false, or ``ghost'', solutions can appear near such minima,
relatively far from the true source $\vect{s}$.
At least they should not appear,
and $\mathcal{C}_k$ shouldn't break into the disjointed groups of the balls as $k \rarr \infty$,
if $2\gamma < \mu$, where $\mu$ is the minimal value of $\widehat{D}(\vect{b})$
at the locmins $\vect{b}$ that are not the ``descendants'' of $\vect{s}$.

Meta-refinement: we run, in parallel,
several instances of $|\widehat{D} - \lambda|$-trick with different $\lambda$
(e.g. $\lambda_{ij} = \pm \frac{i}{2^{j-1}} \gamma)$,
compare how the respective covers behave, and spawn new instances if needed.

{\ }

\textbf{Le encouragement.}
From H. Lebesgue's letter to E. Borel about the geometric approximations in sound ranging,
Feb (?) 1915 (original text at \cite[p. 323]{lebesgue1991}, translation at \cite[p. 146--147]{aubin2014}):

\cenlin{\textsl{{\small\begin{tabular}{p{7cm}|p{6.5cm}}
``\textellipsis{} Au fond la chose ne m'int\'eresse plus:\newline
1$^\circ$ parce que j'ai constat\'e que je ne vois dans les lunettes que les objets brillamment \'eclair\'es;\newline
2$^\circ$ parce que \textellipsis{}''
&
``\textellipsis{} At bottom, I have lost interest in this:\newline
1$^\circ$ because I realized that I only see in the telescopes objects that are brightly lit;\newline
2$^\circ$ because \textellipsis{}''
\end{tabular}}}}

\end{document}